\documentclass{article}
\usepackage{harvard}
\usepackage{graphicx}
\usepackage{camnum}
\usepackage{amsmath}
\usepackage{amsfonts}
\usepackage{arydshln}

\usepackage{dutchcal}
\usepackage{listings}
\usepackage{mathalfa}
\usepackage{mathrsfs}

\usepackage{bm} 
\usepackage{fixmath} 

\usepackage{amsfonts}
\usepackage{graphicx}
\usepackage{epstopdf}
\usepackage{algorithmic}

\numberwithin{figure}{section} \numberwithin{equation}{section}

\allowdisplaybreaks

\newcommand{\G}{\CC{\Gamma}}
\newcommand{\ee}{{\mathrm e}}
\newcommand{\ii}{{\mathrm i}}

\newcommand{\PP}{\mathrm{P}}
\newcommand{\DDD}{\DD{D}}
\newcommand{\EEE}{\DD{E}}

\newcommand{\Ball}{\mathcal{B}}
\newcommand{\Azero}{{\mathfrak{A}^{\hspace*{-6pt}\raisebox{4pt}{\footnotesize$\circ$}}}}
\newcommand{\Dom}{\Omega}

\begin{document}
\title{A framework for stable spectral methods in $d$-dimensional unit balls}
\author{Jing Gao and Arieh Iserles}

\maketitle
\begin{abstract}
  The subject of this paper is the design of efficient and stable spectral methods for time-dependent partial differential equations in unit balls. We commence by sketching the desired features of a spectral method, which is defined by a choice of an orthonormal basis acting in the spatial domain. We continue by considering in detail the choice of a W-function basis in a disc in $\mathbb{R}^2$. This is a nontrivial issue because of a clash between two objectives: skew symmetry of the differentiation matrix (which ensures {\em inter alia\/} that the method is stable) and the correct behaviour at the origin. We resolve it by representing the underlying space as an affine space and splitting the underlying functions. This is generalised to any dimension $d\geq2$ in a natural manner and the paper is concluded with numerical examples that demonstrate how our choice of basis attains the best outcome out of a number of alternatives.
\end{abstract}

\tableofcontents

\thispagestyle{empty}

\section{Introduction: spectral methods}

We are concerned in this paper with the solution of differential equations of the form
\begin{equation}
  \label{eq:1.1}
  \frac{\partial u}{\partial t}=\mathcal{L}u+f(t,\MM{x},u),\qquad \MM{x}\in\Dom,\quad t\geq0,
\end{equation}
where $\Dom$ is a connected domain with Jordan boundary, given with $\CC{L}_2$ initial conditions in $\Dom$ and zero boundary conditions on its boundary. Here $\mathcal{L}$ is a linear differential operator that generates a strongly continuous semigroup, while $f$ is a Lipschitz function.  More specifically, our interest is focused on spectral methods \cite{funaro92pad,hesthaven07smt} in the special setting of $\Dom=\Ball$, where $\Ball\subset\mathbb{R}^d$ is the  unit ball,
\begin{displaymath}
  \Ball=\{\MM{x}\in\mathbb{R}^d\,:\, \|\MM{x}\|_2\leq1\}.
\end{displaymath}

While our setting is very general, throughout this paper we illustrate our ideas with two paradigmatic equations: the {\em diffusion equation\/}
\begin{displaymath}
  \frac{\partial u}{\partial t}=\Delta u
\end{displaymath}
and the {\em linear Schr\"odinger equation (LSE)\/}
\begin{displaymath}
  \ii\frac{\partial u}{\partial t}=-\Delta u+V(\MM{x})u.
\end{displaymath}

Assuming that the solution of \R{eq:1.1} evolves in the separable Hilbert space $\mathcal{H}$ with the inner product $\langle\,\cdot\,,\,\cdot\,\rangle$, we let $\Phi=\{\varphi_n\}_{n\in\mathbb{Z}_+}$ be an orthonormal basis of $\mathcal{H}$ that satisfies zero boundary conditions on $\partial\Dom$.  The solution of \R{eq:1.1} is expanded in the form
\begin{equation}
  \label{eq:1.2}
  u(\MM{x},t)=\sum_{m=0}^\infty \hat{u}_m(t)\varphi_m(\MM{x}),
\end{equation}
where the functions $\hat{u}_m$ are determined by the Galerkin conditions
\begin{displaymath}
  \left\langle \sum_{m=0}^\infty \hat{u}_m'\varphi_m-\mathcal{L} \sum_{m=0}^\infty \hat{u}_m\varphi_m-f\!\left(t,\,\cdot\,,\sum_{m=0}^\infty \hat{u}_m\varphi_m\right)\!,\varphi_n\right\rangle=0,\qquad n\in\mathbb{Z}_+.
\end{displaymath}
Because of orthonormality of $\Phi$ and linearity of $\mathcal{L}$, this reduces to the ODE system
\begin{equation}
  \label{eq:1.3}
  \hat{u}_n'=\sum_{m=0}^\infty \hat{u}_m \langle\mathcal{L}\varphi_m,\varphi_n\rangle +\left\langle f\!\left(t,\,\cdot\,,\sum_{m=0}^\infty \hat{u}_m\varphi_m\right)\!,\varphi_n\right\rangle,\qquad n\in\mathbb{Z}_+.
\end{equation}
Needless to say, in practice both \R{eq:1.2} and \R{eq:1.3} are truncated.

Let us assume that the $\varphi_n$s are sufficiently regular -- in  applications they are often analytic functions and this is the setting when spectral methods display their maximal advantage. Since $\Phi$ is a basis of $\mathcal{H}$ and $\varphi_n'\in\mathcal{H}$, there exists a linear transformation $\DDD^{\,[\ell]}$ taking the basis $\Phi$ to the  partial derivative with respect to $x_\ell$,
\begin{displaymath}
  \frac{\partial \varphi_m}{\partial x_\ell}=\sum_{n=0}^\infty \DDD_{m,n}^{\,[\ell]}\varphi_n,\qquad m\in\mathbb{Z}_+.
\end{displaymath}
The matrices $\DDD^{\,[\ell]}$, $\ell=1,\ldots,d$,  are called the {\em differentiation matrices\/} of $\Phi$.

This is the moment to list the desired features of a spectral method, while bearing in mind that the choice of a spectral method (at least in the current setting, we say nothing here about the solution of the semidiscretised ODE system \R{eq:1.3}) depends wholly on the choice of the orthonormal system $\Phi$. At the first instance, a spectral method (i.e., the choice of $\Phi$) must be assessed with respect to the following five desiderata:
\begin{enumerate}
\item {\bf Stability:\/} This is the {\em sine qua non\/} of any numerical method for time-dependent PDEs. By virtue of the Lax equivalence theorem, the solution of (truncated) \R{eq:1.3}  converges to $u$ as $N\rightarrow\infty$  if and only if the solution of 
\begin{displaymath}
    \hat{u}_n'=\sum_{m=0}^N \hat{u}_m \langle\mathcal{L}\varphi_m,\varphi_n\rangle,\qquad n=0,\ldots,N
\end{displaymath}
is uniformly well posed for $N\gg1$. 
\item {\bf Fast convergence:} In classical spectral methods the set $\Phi$ consists  of orthogonal polynomials or (in the case of periodic boundary conditions) is a Fourier basis. In that case, provided that all functions concerned are  analytic, the expansion \R{eq:1.2} converges at least at an exponential speed and we might choose relatively small $N$ while attaining high accuracy. This feature makes spectral methods very powerful {\em whenever they can be applied.\/} Unfortunately, this caveat typically restricts such methods to fairly simple geometries.
\item {\bf Fast expansion:} A major computational issue is to expand functions in the underlying basis $\Phi$, something that needs be done, often repeatedly, in each step of the solution. A major attraction of spectral methods in the presence of periodic boundary conditions is that Fourier bases lend themselves to fast computation with the Fast Fourier Transform but fast algorithms exist also for  expansions in the most popular bases of orthogonal polynomials \cite{olver29fau}.
\item {\bf Easy algebra:} Numerical mathematics reduces analysis to linear algebra, and this is the case with both the ODEs \R{eq:1.3} and their discretisation. Working at the level of orthogonal expansions, a function $g\in\mathcal{H}$ is represented by the coefficients of its orthogonal expansion $\hat{\MM{g}}=(\hat{g}_n)_{n\in\mathbb{Z}}$ such that $g=\sum_{n=0}^\infty \hat{g}_n \varphi_n$. We deduce that the representation of $\partial g/\partial x_\ell$ is $\DDD^{\,[\ell]}\hat{\MM{g}}$ and, with greater generality, the action of the differential operator $\mathcal{L}$ can be constructed, Lego blocks style, using differentiation matrices. Consequently, the structure of differentiation matrices is fundamental in ensuring that the underlying linear algebra -- not just representing the action of $\mathcal L$ by a matrix but solving linear systems and computing exponentials and other functions involving the matrix in question -- can be performed cheaply and effectively.
\item {\bf Preservation of structure:} Diverse PDEs possess structure which is of major importance in both  mathematics and their application areas. Thus, the $\CC{L}_2(\Dom)$ norm of the solution of the diffusion equation (with zero boundary conditions) should  monotonically decrease in $\CC{L}_2$: this reflects the second law of thermodynamics. In the `spectral method world' and using Cartesian coordinates this means that $\sum_{\ell=1}^d {\DDD^{\,[\ell]}}^*\DDD^{\,[\ell]}$ should be a negative semidefinite matrix.

Likewise, the solution of LSE is {\em unitary:\/} The $\CC{L}_2(\Dom)$ norm stays constant for all $t\geq0$. This reflects the quantum-mechanical interpretation of LSE: for any measurable subset $\Xi\subseteq\Dom$ the integral $\int_\Xi |u(\MM{x},t)|^2\D \MM{x}$ is the probability of a particle residing in the set $\Xi$ at time $t\geq0$. Thus, unless $\|u(\,\cdot\,,t)\|\equiv1$, the solution is nonphysical. In the present context this means that $\ii \sum_{\ell=1}^d {\DDD^{\,[\ell]}}^*\DDD^{\,[\ell]}$ should be a skew-Hermitian matrix.

Note that both conditions -- negative definiteness and skew-Hermicity -- imply stability, thereby fulfilling the first, most important feature of any method for time-dependent PDEs. Note further that, once each $\DDD^{[\ell]}$ is skew-Hermitian then $\sum_{\ell=1}^d {\DDD^{\,[\ell]}}^*\DDD^{\,[\ell]}$ is negative semidefinite and $\ii \sum_{\ell=1}^d {\DDD^{\,[\ell]}}^*\DDD^{\,[\ell]}$ skew Hermitian. This underlies the {\em leitmotif\/} of this paper: {\em using orthonormal systems with skew-Hermitian differentiation matrices.\/}

There are many other structures of interest, not least Hamiltonians, but they form no part of the work of this paper.
\end{enumerate}

It is important to comment on a critical issue. $\ii \sum_{\ell=1}^d {\DDD^{\,[\ell]}}^*\DDD^{\,[\ell]}$ is skew Hermitian for {\em any\/} square matrices $\DDD^{\,[\ell]}$ but, in our context, we need each $-{\DDD^{[\ell]}}^*$ to be a differentiation matrix as well. Therefore, skew Hermicity bridges the strong and the weak form of the PDE and is crucial in approximating the LSE while  maintaining the $\CC{L}_2$ norm and approximating the diffusion equation while dissipating it.

An obvious orthonormal set consists of polynomials: their univariate theory is exceedingly well known \cite{chihara78iop,ismail05cqo} and has been extended to numerous multivariate sets, in particular to discs \cite{zernike34bss} and simplexes \cite{dunkl14ops,koornwinder75tva}. However, while orthogonal polynomials have been used extensively in spectral methods for boundary-value problems, they are highly problematic insofar as initial-value PDEs \R{eq:1.1} are concerned: their differentiation matrices are lower-triangular. This is inimical to preservation of unitarity and, moreover, leads to instability. While stability can be recovered  using in tandem two orthonormal systems \cite{townsend15asp}, our approach is to abandon polynomial bases altogether.

In \cite{iserles19oss,iserles20for,iserles21fco,iserles24ost} we have introduced for $d=1$ and $\Dom=\mathbb{R}$ the concept of {\em T-functions.\/} For any Borel measure supported on $\mathbb{R}$ we can construct, using a Fourier transform, a corresponding set $\Phi$ of functions for which the differentiation matrix $\DDD$ is skew-Hermitian and tridiagonal. Skew-Hermicity ensures unitarity (hence also stability) and tridiagonality is a gateway to fast algebra. The univariate setting can be extended to $\Dom=\mathbb{R}^d$ using tensor products. However, T-functions are unsuitable in the presence of finite boundaries. This motivated in \cite{iserles24ssm} the introduction of {\em W-functions.\/}

Given an interval $(a,b)$ and a $\CC{C}^1(a,b)$ weight function $w(x)\geq0$, $\int_a^b w(x)\D x>0$, we let $\{p_n\}_{n\in\mathbb{Z}_+}$ be the underlying set of orthonormal polynomials. The $n$th W-function is then $\varphi_n(x)=\sqrt{w(x)} p_n(x)$, $n\in\mathbb{Z}_+$ and it is easy to see that $\Phi$ is an orthonormal basis of $\mathcal{H}=\CC{L}_2(a,b)$. Moreover, it has been proved in \cite{iserles24ssm} that the underlying differentiation matrix $\DDD$ is skew symmetric if and only if $w(a)=w(b)=0$. Two cases have been worked out in great detail in \cite{iserles24ssm}: {\em ultraspherical W-functions,\/} generated by $w(x)=(1-x^2)^\alpha$, $x\in(-1,1)$, for $\alpha>0$ and {\em Laguerre W-functions,\/} generated by $w(x)=x^\alpha \ee^{-x}$, $x\in(0,\infty)$, again for $\alpha>0$. In both instances, while not being  tridiagonal, $\DDD$ has another welcome feature: it is semi-separable of rank s \cite{hackbusch15hma}, where $s=1$ for Laguerre W-functions and $s=2$ for ultraspherical W-functions: every submatrix placed entirely above (or entirely below) the main diagonal is of rank s. This allows for fast algebra. Moreover, judicious choice of $\alpha$ leads to exponential convergence.

W-functions can be extended to $d$-dimensional boxes by tensor products. In that instance the differentiation matrix (in the ultraspherical or Laguerre cases) is semi-separable and the advantage of fast algebra is maintained.

A word on the function spaces used in this paper. The least requirement one may impose is that the underlying functions live in a (possibly weighed) $\CC{L}_2$ space in $\Ball$, but this is way too weak both for meaningful statements on convergence in the context of spectral methods and for the imposition of boundary conditions. In this paper we consider functions analytic in $\Ball$ (considered as a subset of $\mathbb{R}^2$), i.e.\ whose Taylor expansion about the origin converges in $\Ball$: we denote this function space by $\mathfrak{A}(\Ball)$. The intermediate case, the Sobolev space $\CC{H}^p_2(\Ball)$ for a suitable $p\geq1$, follows at once with obvious limitations, e.g.\ exponential convergence reduces to  polynomial one. In addition, we assume that all underlying functions obey zero boundary conditions on the unit sphere.

In Section~2 we focus on the disc, $d=2$. Interestingly, this simplest nontrivial case already displays all the intricacies associated with designing orthonormal systems in any dimension $d\geq2$. The obvious course of action is to use a scalar inner product in polar coordinates $0\leq r\leq 1$, $-\pi\leq\theta_1\leq\pi$, $0\leq\theta_2,\ldots,\theta_{d-1}\leq\pi$
\begin{equation}
  \label{eq:1.4}
  \langle\!\langle f,g\rangle\!\rangle=\int_{\|\Mm{x}\|\leq1} f(\MM{x})\overline{g(\MM{x})}\D \MM{x}=\int_0^1\int_{-\pi}^\pi \int_0^\pi\cdots\int_0^\pi  r f(r,\MM{\theta})\overline{g(r,\MM{\theta})}\D \MM{\theta}\D r,
\end{equation}
which is the same as standard Cartesian $\CC{L}_2$ inner product for $x^2+y^2\leq1$. This is the standard approach to approximation in a disc and there exists rich theory on polynomial approximation, using Zernike polynomials \cite{zernike34bss} and their generalisations \cite{wunsche05gzd}. It might thus seem that all we need is to `translate' polynomials to W-functions {\em \'a la\/} \cite{iserles24ssm} but, unfortunately, this course of action leads to systems whose differentiation matrices depart substantially from skew symmetry. 

As an alternative, we are using a Cartesian inner product in the box $\{(r,\theta)\,:\, r\in[0,1],\;\theta_1\in[-\pi,\pi],\; \theta_j\in[0,\pi],\; j=2,\ldots,d-1\}$, 
\begin{equation}
  \label{eq:1.5}
  \langle f,g\rangle=\int_0^1\int_{-\pi}^\pi \int_0^\pi\cdots\int_0^\pi f(r,\MM{\theta})\overline{g(r,\MM{\theta})}\D \MM{\theta}\D r,
\end{equation}
but this is not problem-free either. The boundary conditions on $\|\MM{x}\|=1$ translate seamlessly to $r=1$, while the boundary conditions on $\MM{\theta}$ remain periodic. However, for $r=0$ we require natural boundary conditions, $\partial f(0,\MM{\theta})/\partial\theta_m\equiv0$, $m=1,\dots,d-1$ \cite[p.~336--7]{iserles09fna}. This is inconsistent with our approach to W-functions: if $\varphi_{n,m}(r,\theta)=\tilde{\varphi}_n(r)\ee^{\ii m\theta}$ is a W-function then $\partial\varphi_{n,m}(0,\theta)=\ii m \tilde{\varphi}_n(0)\ee^{\ii m\theta}$ and setting this to zero for all $n$ implies $\varphi_{n,m}(0,\theta)=0$ for all $n\in\mathbb{Z}_+$. This precludes convergence in the $\CC{L}_\infty$ norm and renders $\CC{L}_2$ convergence exceedingly slow.

Yet, imposing $\varphi_{n,m}(0,\theta)=0$ has one beneficial effect: it represents a necessary and sufficient condition, similarly to \cite{iserles24ssm}, for differentiation matrices to be skew symmetric. The tension between skew symmetry and fast convergence is at the heart of this paper: a course of action, which we advocate in Section~2, is to split $f\in\mathfrak{A}(\Ball)$ into a sum of the form $f_0+f_1$ such that $f_0$ is orthogonal to $f_1$ and $f_1(0,\MM{\theta})=0$ -- in addition, $f_0(1,\MM{\theta})=f_1(1,\MM{\theta})=0$. In other words, we consider $\mathfrak{A}(\Ball)$ with zero boundary condition for $r=1$ as an affine space, $\mathfrak{A}(\Ball)=h(r,\theta)\oplus \Azero(\Ball)$, where $h\in\mathfrak{A}(\Ball)$, $h(0,\MM{\theta})=f(0,\MM{\theta})$, while $\Azero(\Ball)\subset\mathfrak{A}(\Ball)$ consists of functions that obey zero boundary conditions at $r=0$. Using a variation upon the theme of Zernike polynomials, in tandem with the W-function approach, we present an orthonormal basis on $\mathfrak{A}(\Ball)$ whose differentiation matrices in $\Azero(\Ball)$ are skew symmetric: we say that such differentiation matrices are {\em essentially skew symmetric.\/} Since such matrices have at most a finite number of eigenvalues in the complex right-half plane, they allow for the construction of stable semidiscretisations for our two paradigmatic equations.

In Section 2 we develop in detail the theory in the disc, $d=2$, and this is generalised in a straightforward manner to any $d\geq2$ in Section~3. Finally, in Section~4 we subject the same function $f$ to five different computational approaches based on the ideas in this paper. Four of these approaches do not tick {\em all\/} the boxes and they either lead to slow convergence or to differentiation matrices that depart significantly from skew symmetry, while the last  is the one we are advocating and this is confirmed by numerical results.

\section{Zernike-type W-functions}

\subsection{$d=2$}

In this paper we consider W-functions on unit balls $\Ball\subset\mathbb{R}^d$ with zero boundary conditions on $\partial\Ball$. In the present section we commence with $d=2$, hence, in polar coordinates, a single radial variable $r\in[0,1]$ and a single angular variable $\theta\in[-\pi,\pi]$,  imposing zero boundary conditions for $r=1$. The usual $\CC{L}_2$ inner product in polar coordinates has been given in \R{eq:1.4} and it  motivates the standard orthogonal system on $\Ball$ in polar coordinates, namely {\em Zernike polynomials\/}
\begin{equation}
  \label{eq:2.1}
  p_{n,m}(r,\theta)=\PP_n^{(0,1)}(2r-1)\ee^{\ii m\theta},\qquad r\in[0,1],\; \theta\in[-\pi,\pi],\qquad n\in\mathbb{Z}_+,\; m\in\mathbb{Z},
\end{equation}
where $\PP_n^{(\alpha,\beta)}$ is an $n$th-degree Jacobi polynomial \cite{zernike34bss} (the original system is usually written somewhat differently, but is isomorphic to \R{eq:1.3}). Note that the polynomials $\{\PP_n^{(\alpha,\beta)}\}_{n\in\mathbb{Z}_+}$ are orthogonal in $[-1,1]$ with respect to the weight function $(1-x)^\alpha(1+x)^\beta$ and this renders Zernike polynomials orthogonal in polar coordinates.

Zernike polynomials been extended to the so-called {\em disc polynomials\/} in \cite{wunsche05gzd}, allowing for more general Jacobi weight functions. However, \R{eq:2.1} and its generalisation to disc polynomials exhibit two critical shortcomings in the present context: firstly, their differentiation matrices are not skew symmetric and, secondly, they fail to obey zero boundary conditions for $r=1$.

With greater generality, the inner product \R{eq:1.4} is highly problematic within our framework: consider just functions of the radial variable $r$ and set with greater generality
\begin{equation}
  \label{eq:2.2}
  \langle\!\langle f,g\rangle\!\rangle=\int_0^1w(r) f(r)\overline{g(r)}\D r,
\end{equation}
where $w$ is a differentiable weight function, $w\not\equiv\CC{const}$.

\begin{theorem}
  Let a scalar product be given by \R{eq:2.2} . The conditions
  \begin{enumerate}
  \item[(a)] $\{q_n\}_{n\in\mathbb{Z}_+}$ is a real orthogonal basis with respect to the inner product $\langle\!\langle\,\cdot\,,\,\cdot\,\rangle\!\rangle$;
  \item[(b)] $w(0)=0$, while $w(x)\neq0$, $x\in(0,1)$;
  \item[(c)] $q_n(1)=0$, $n\in\mathbb{Z}_+$;
  \item[(d)] The differentiation matrix
  \begin{displaymath}
    \DDD_{m,n}=\langle\!\langle q_m',q_n\rangle\!\rangle,\qquad m,n\in\mathbb{Z}_+,
  \end{displaymath}
  is skew Hermitian
  \end{enumerate}
  are contradictory.
\end{theorem}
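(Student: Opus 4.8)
The plan is to exploit the only nontrivial structural ingredient -- differentiation under the weighted inner product \R{eq:2.2} -- by integration by parts, thereby converting the skew-symmetry demand (d) into a constraint on $w'$ alone; completeness of the basis in (a), together with $w\not\equiv\CC{const}$, should then close the argument by forcing $w'\equiv0$. Since the $q_n$ are real by (a), the matrix $\DDD$ is real and skew-Hermicity in (d) is simply skew symmetry, $\DDD_{m,n}=-\DDD_{n,m}$; in particular $\DDD_{m,n}+\DDD_{n,m}=0$ for all $m,n\in\mathbb{Z}_+$.

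First I would compute the symmetric part of $\DDD$. Combining the two defining integrals and integrating by parts,
\begin{displaymath}
  \DDD_{m,n}+\DDD_{n,m}=\int_0^1 w(r)\bigl(q_m(r)q_n(r)\bigr)'\,\D r=\bigl[w(r)q_m(r)q_n(r)\bigr]_0^1-\int_0^1 w'(r)q_m(r)q_n(r)\,\D r.
\end{displaymath}
The boundary term vanishes identically: at $r=1$ because $q_m(1)q_n(1)=0$ by (c), and at $r=0$ because $w(0)=0$ by (b). Hence skew symmetry (d) is equivalent to
\begin{displaymath}
  \int_0^1 w'(r)q_m(r)q_n(r)\,\D r=0,\qquad m,n\in\mathbb{Z}_+,
\end{displaymath}
that is, to the statement that the very same system $\{q_n\}$ is also \emph{orthogonal} with respect to the signed measure $w'(r)\,\D r$, with all diagonal entries vanishing as well.

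Next I would turn this into a contradiction with completeness. The natural device is the multiplication operator $f\mapsto (w'/w)f$: for each fixed $m$ the identity above reads $\langle\!\langle (w'/w)q_m,q_n\rangle\!\rangle=0$ for every $n$, so $(w'/w)q_m$ is orthogonal to the whole basis and must vanish. Thus $w'=0$ wherever $q_m\neq0$; since completeness in (a) forbids a positive-measure set on which all $q_m$ vanish simultaneously (its indicator would be a nonzero $\CC{L}_2$ function orthogonal to every $q_m$), the sets $\{q_m\neq0\}$ cover $(0,1)$ up to a null set, and therefore $w'\equiv0$ on $(0,1)$, contradicting $w\not\equiv\CC{const}$.

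The step that needs care -- and the main obstacle -- is this last one, because $w(0)=0$ makes $w'/w$ singular at the origin, so $(w'/w)q_m$ need not lie in the space on which \R{eq:2.2} is defined and the ``orthogonality forces vanishing'' argument cannot be applied verbatim. The clean way around this is to avoid dividing by $w$ and argue instead through the products $q_mq_n$: their finite linear span is rich enough that $\int_0^1 w'(r)q_m(r)q_n(r)\,\D r=0$ for all $m,n$ forces the continuous function $w'$ (times the boundary factor produced by (c)) to be orthogonal to a dense family, hence to vanish. In the paradigmatic polynomial case this is transparent, since the products span $(1-r)^2$ times all polynomials, so every moment of the continuous function $w'(r)(1-r)^2$ vanishes, whence $w'(r)(1-r)^2\equiv0$ and $w'\equiv0$ on $[0,1)$. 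I would therefore present the integration-by-parts reduction as the engine of the proof and dispose of the origin through this moment-and-product argument rather than through the singular factor $w'/w$.
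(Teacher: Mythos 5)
Your proof has the same skeleton as the paper's: integrate by parts, kill the boundary term at $r=1$ via (c) and at $r=0$ via (b), and reduce skew symmetry to $\int_0^1 w'q_mq_n\D r=0$ for all $m,n\in\mathbb{Z}_+$ --- this part matches the paper line for line (the paper writes the equivalent identity $\DDD_{m,n}+\DDD_{n,m}=-\int_0^1 w'q_mq_n\D r$). Where you diverge is the concluding completeness step. The paper does precisely what you call the ``natural device'': from the vanishing of these integrals it infers, by expanding in the basis, that $\langle\!\langle w'/w,f\rangle\!\rangle=0$ for every $f$ in the underlying Hilbert space, hence $w'\equiv0$, paying no attention to whether $w'/w$ --- singular at $r=0$, where $w$ vanishes by (b) --- defines an admissible element of that space or a bounded functional on it. Your criticism is therefore aimed at the paper's actual argument, and it is a fair one: an \emph{unbounded} functional can vanish on an orthogonal basis without vanishing identically, so some integrability of $(w'/w)q_m$ in $\CC{L}_2(w\,\D r)$ is genuinely needed to run that inference. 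Your replacement --- never divide by $w$, show instead that $w'$ annihilates the span of the products $q_mq_n$, and finish by a moment argument --- buys rigour near the origin, but note that as written it is only complete for polynomial bases: the theorem allows an arbitrary real orthogonal basis $\{q_n\}$, and for such a basis the claim that $\{q_mq_n\}$ spans a family dense enough to force $w'\equiv0$ is exactly what would need proof (products drawn from an orthogonal basis need not span anything convenient). So your write-up and the paper's share a residual softness at the same spot; yours has the merit of locating it precisely and of repairing it in the paradigmatic case.
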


\begin{proof}
  Let $m,n\in\mathbb{Z}_+$. Integrating by parts,
  \begin{displaymath}
    \DDD_{m,n}=\langle\!\langle q_m',q_n\rangle\!\rangle= w(r)q_m(r)q_n(r)\,\rule[-4pt]{0.75pt}{18pt}_{\,r=0}^{\, r=1}- \int_0^1 w' q_mq_n\D r-\int_0^1 wq_mq_n'\D r
  \end{displaymath}
therefore, because of conditions (b) and (c),
  \begin{displaymath}
    \DDD_{m,n}+\DDD_{n,m}=-\int_0^1 w'q_mq_n\D r.
  \end{displaymath}
  It now follows from condition (d) that $\int_0^1 w'q_mq_n\D r=0$ for all $m,n\in\mathbb{Z}_+$, but this is clearly impossible. Otherwise, since $\{\varphi_n\}_{n\in\mathbb{Z}_+}$ is an orthogonal basis and every function in the underlying Hilbert space $\mathcal{G}$ can be expanded in it, it would have followed that $\langle\!\langle w'/w,f\rangle\!\rangle=0$ for all $f\in\mathcal{G}$. This is possible only if $w'\equiv0$ but this is ruled out by condition (b).
\end{proof}

The theorem is not very surprising. The deep reason skew symmetry matters is its connection to standard $\CC{L}_2$ unitarity: the Lie algebra corresponding to the unitary group $\CC{U}(n,\mathbb{C})$ is $\GG{su}(n,\mathbb{C})$, the set of skew-Hermitian matrices. Yet, the inner product \R{eq:1.4} is not a `standard' $\CC{L}_2$! Therefore it makes better sense to seek convergence (and to measure the error) not in  the  inner product \R{eq:1.4}, natural in polar coordinates, and as an alternative  employ the standard $\CC{L}_2$ inner product \R{eq:1.5}. Thus, we convert \R{eq:2.1} (or, with greater generality, disc polynomials) to W-functions, letting
\begin{equation}
  \label{eq:2.3}
  \varphi_{n,m}(r,\theta)=\pi^{-1/2} 2^{\frac12(\alpha+\beta)}(1-r)^{\alpha/2} r^{\beta/2} \tilde{\PP}_n^{(\alpha,\beta)}(2r-1)\ee^{\ii m\theta}, \qquad n\in\mathbb{Z}_+,\; m\in\mathbb{Z},
\end{equation}
where $\alpha,\beta>-1$ and
\begin{displaymath}
  \tilde{\PP}_n^{(\alpha,\beta)}(x)=\frac{\PP_n^{(\alpha,\beta)}(x)}{\sqrt{h^{(\alpha,\beta)}_n}},\qquad h^{(\alpha,\beta)}_n=\frac{2^{1+\alpha+\beta}\G(1+\alpha+n)\G(1+\beta+n)}{n!(1+\alpha+\beta+2n)\G(1+\alpha+\beta+n)},
\end{displaymath}
are orthonormal Jacobi polynomials \cite[p.~260]{rainville60sf}.\footnote{The factor $2^{\frac12(\alpha+\beta)}$ is due to the change of variable $x=2r-1$ and division by $\sqrt{2}$.} Yet, this choice is problematic as well. In order for $\DDD$ to be skew symmetric, a necessary and sufficient condition is $\alpha,\beta>0$ \cite{iserles24ssm}. However $\beta>1$ implies that $\varphi_{m,n}(0,\theta)\equiv0$: this precludes $\CC{L}_\infty$ convergence for general functions and indicates that $\CC{L}_2$ convergence, if at all possible, would be excruciatingly slow.

To avoid this conundrum, we split $f\in\mathfrak{A}(\Ball)$ in the form $f=f_0+f_1$, subject to three conditions:
\begin{enumerate}
\item $f_0(1)=f_1(1)=0$ (boundary conditions for $r=1$ are satisfied);
\item $f_0(0)=f(0)$, $f_1(0)=0$;
\item $\langle f_0,f_1\rangle=0$, therefore $\|f\|^2=\|f_0\|^2+\|f_1\|^2$.
\end{enumerate}
Given $f\in\CC{C}(\Ball)$, we call a pair $\{f_0,f_1\}$ of $\mathfrak{A}(\Ball)$ functions such that $f_0+f_1=f$ and conditions 1--3 are satisfied, a {\em proper orthogonal splitting (POS).\/} Herewith a constructive means to produce a POS.

Let $\tilde{f}_0$ and $\tilde{f}_1$ be two $\mathfrak{A}(\Ball)$ functions that obey the conditions 1--2 and such that $\tilde{f}_0+\tilde{f}_1=f$. We set, in a single step of the Gram--Schmidt algorithm,
\begin{equation}
  \label{eq:2.4}
  f_0(r)=\tilde{f}_0(r)-c\tilde{f}_1(r),\qquad f_1(r)=(1+c)\tilde{f}_1(r),\qquad\mbox{where}\qquad c=\frac{\langle \tilde{f}_0,\tilde{f}_1\rangle}{\|\tilde{f}_1\|^2}.
\end{equation}
It is trivial to prove that conditions 1--3 hold and we have a POS.

Reasonable choices of $\tilde{f}_0$ and $\tilde{f}_1$ in \R{eq:2.4} are $\tilde{f}_0(r)=f(0)(1-r)$ or $\tilde{f}_0(r)=f(0)\cos(\pi r/2)$, with $\tilde{f}_1(r)=f(r)-\tilde{f}_0(r)$.

The idea is to expand a general $f\in\mathfrak{A}(\Ball)$ in the form
\begin{equation}
  \label{eq:2.5}
  f(r,\theta)=\sum_{m=-\infty}^\infty f^\circ_m \ee^{\ii m\theta}+\sum_{n=0}^\infty \sum_{m=-\infty}^\infty \hat{f}_{n,m} \varphi_{n,m}(r,\theta),
\end{equation}
where
\begin{Eqnarray*}
  f_m^\circ&=&(2\pi)^{-1/2} \int_{-\pi}^\pi f_0(r,\theta)\ee^{-\ii m\theta}\D\theta,\\
  \hat{f}_{n,m}&=& \int_0^1\int_{-\pi}^\pi f_1(r,\theta)\overline{\varphi_{n,m}(r,\theta)}\D\theta\D r,\qquad n\in\mathbb{Z}_+,\;\; m\in\mathbb{Z},
\end{Eqnarray*}
and $\varphi_{n,m}$ has been given in \R{eq:2.3}.

As would become clear in Subsection 2.3, differentiation matrices arising from the expansion of $f_1$ are skew symmetric, as is the differentiation matrix of $f_0$ with respect to $\theta$. However, it is impossible to expect the $1\times1$ `differentiation matrix' of $f_0$ with respect to $r$ to be skew symmetric unless $f_0\equiv0$. This is not a serious impediment to the use of our W-functions and, as will see, both diffusion and linear Schr\"odinger equations can be solved in a stable manner with the present approach, while the solution of LSE is unitary.

\subsection{The optimal choice of $\alpha,\beta>0$}

Let $\DDD^{[r]}$ be a differentiation matrix with respect to $r$ only.

In principle, any choice of $\alpha$ and $\beta$ in \R{eq:2.3} would do, as long as they are both positive, to ensure that the differentiation matrix is skew symmetric. In practice, though, we must consider three imperatives:
\begin{enumerate}
\item Fast convergence of the expansion \R{eq:2.5};
\item Explicit knowledge of the entries of $\DDD^{[r]}$; and
\item Fast algebra.
\end{enumerate}

The first condition is satisfied when both $\alpha$ and $\beta$ are even integers, since then $\varphi_{n,m}$ is an analytic function and, using similar construction to \cite{iserles24ssm} and standard results on the convergence of analytic orthogonal sequences from \cite{walsh65iar}, it follows that the speed of convergence in \R{eq:2.5} is exponential, as long as $f$ is analytic for $r\in(a,b)$, $a<0$ and $1<b$, while having a simple zero at both $r=0$ and $r=1$.

The entries of $\DDD^{[r]}$ are known explicitly in the ultraspherical case, i.e.\ when $\alpha=\beta$,
\begin{equation}
  \label{eq:2.6}
  \DDD_{m,n}^{[r]}=
  \begin{case}
    \GG{a}_m\GG{b}_n, & m\geq n+1,\; m+n\mbox{\ odd},\\
    -\GG{a}_n\GG{b}_m, & m\leq n-1,\; m+n\mbox{\ odd},\\
    0, & m+n\mbox{\ even}
  \end{case}
\end{equation}
where
\begin{displaymath}
  \GG{a}_m=\sqrt{\frac{m!(2m+2\alpha+1)}{2\G(m+1+2\alpha)}},\qquad \GG{b}_n=\sqrt{\frac{(2n+1+2\alpha)\G(n+1+2\alpha)}{2n!}},\qquad m,n\in\mathbb{Z}_+.
\end{displaymath}
\cite{iserles24ssm}. More specifically, for $\alpha=\beta=2$ we have
\begin{equation}
  \label{eq:2.7}
  \GG{a}_m=\sqrt{\frac{m!(2m+5)}{2(m+4)!}},\qquad \GG{b}_n=\sqrt{\frac{(2n+5)(n+4)!}{2n!}}.
\end{equation}

Therefore, $\DDD^{[r]}$ is semi-separable of rank 2 -- as a matter of fact, given that $\DDD_{m,n}=0$ once $m+n$ is even, the cost of fast algebra is practically the same as for semi-separability of rank 1. Thus, the third condition is satisfied as well.

\subsection{Differentiation matrices}

It is convenient to separate between differentiation matrices with respect to $r$ and $\theta$:
\begin{displaymath}
  \DDD^{\,[r]}_{(n,m),(k,\ell)}= \int_0^1\int_{-\pi}^\pi \frac{\partial\varphi_{n,m}(r,\theta)}{\partial r}\overline{\varphi_{k,\ell}(r,\theta)}\D\theta\D r,\qquad n,k\in\mathbb{Z}_+,\quad m,\ell\in\mathbb{Z}
\end{displaymath}
and
\begin{displaymath}
  \DDD^{\,[\theta]}_{(n,m),(k,\ell)}= \int_0^1\int_{-\pi}^\pi \frac{\partial\varphi_{n,m}(r,\theta)}{\partial \theta}\overline{\varphi_{k,\ell}(r,\theta)}\D\theta\D r,\qquad n,k\in\mathbb{Z}_+,\quad m,\ell\in\mathbb{Z}.
\end{displaymath}
Recall that we have decomposed $f=f_0+f_1$: it makes sense to consider first the application of differentiation matrices to $f_1$.

It follows at once from \R{eq:2.3} that
\begin{displaymath}
  \DDD^{\,[r]}_{(n,m),(k,\ell)}=\DDD^{\,[\theta]}_{(n,m),(k,\ell)}=0,\qquad m\neq\ell,
\end{displaymath}
while (recalling that $\beta=\alpha$ and using \R{eq:2.6})
\begin{Eqnarray*}
  \DDD^{\,[r]}_{(n,m),(k,m)}&=&2^{2\alpha+1} \int_0^1(1-r)^\alpha r^\alpha \frac{\D\tilde{\PP}_n^{(\alpha,\alpha)}(2r-1)}{\D r} \tilde{\PP}_k^{(\alpha,\alpha)}(2r-1)\D r\\
  &=&\int_{-1}^1 (1-x^2)^\alpha \frac{\D\tilde{\PP}_n^{(\alpha,\alpha)}(x)}{\D x}\tilde{\PP}_k^{(\alpha,\alpha)}(x)\D x\\
  &=&\begin{case}
    \GG{a}_n\GG{b}_k, & k\geq n+1,\; n+k\mbox{\ odd},\\
    -\GG{a}_k\GG{b}_n, & k\leq n-1,\; n+k\mbox{\ odd},\\
    0, & n+k\mbox{\ even},
  \end{case}
\end{Eqnarray*}
where the $\GG{a}_m$s and $\GG{b}_m$s have been given in \R{eq:2.7}. Likewise,
\begin{Eqnarray*}
  &&\DDD^{\,[\theta]}_{(n,m),(k,m)}\\
  &=&2^{2\alpha+1} (2\pi)^{-1} \int_0^1 (1-r)^\alpha r^\alpha \tilde{\PP}_n^{(\alpha,\alpha)}(2r-1)\tilde{\PP}_k^{(\alpha,\alpha)}(2r-1) \int_{-\pi}^\pi \frac{\D \ee^{\ii m\theta}}{\D\theta} \ee^{-\ii m\theta}\D\theta \\
  &=&\begin{case}
    \ii m, & n=k,\\[2pt]
    0, & n\neq k.
  \end{case}
\end{Eqnarray*}

We deduce not just that $\DDD^{\![r]}$ and $\DDD^{\,[\theta]}$ are both skew Hermitian but that they are massively sparse. This helps fast algebra a great deal, as does the fact that the nonzero blocks of $\DDD^{\,[r]}$ are semi-separable of rank 2, while the nonzero blocks of $\DDD^{\,[\theta]}$ are diagonal.

The function $f_0$ leads to a diagonal, skew-Hermitian $\DDD^{\,[\theta]}$, indexed over $\mathbb{Z}$ and to a $1\times1$ matrix $\DDD^{\,[r]}$ which cannot be skew symmetric unless $f_0(0,\theta)\equiv0$:
\begin{Eqnarray*}
  \label{eq:2.8}
  \Re\int_0^1 \int_{-\pi}^\pi \frac{\partial f_0(r,\theta)}{\partial r} \overline{f_0(r,\theta)}\D\theta\D r&=&\frac12\int_{-\pi}^\pi |f_0(r,\theta)|^2\,\rule[-4pt]{0.75pt}{18pt}_{\,r=0}^{\, r=1}\D\theta=-\frac12 \int_{-\pi}^\pi |f_0(0,\theta)|^2\D\theta\\
  &=&-\frac12 \int_{-\pi}^\pi |f(0,\theta)|^2\D\theta,
\end{Eqnarray*}
because $f_0(1,\theta)\equiv0$.

\vspace{6pt}
\noindent {\bf Definition 1} {\em We say that a differentiation matrix is {\em essentially skew symmetric\/} if its action on $f_1$ is skew symmetric.}

\vspace{6pt}
We deduce that, once acting in Cartesian coordinates, the differentiation matrix $\{\DDD^{\,[r]},\DDD^{[\,\theta]}\}$ is essentially skew symmetric.

The Laplacian in polar coordinates being
\begin{displaymath}
  \Delta f=\frac{\partial^2f}{\partial r^2}+\frac{1}{r}\frac{\partial f}{\partial r}+\frac{1}{r^2}\frac{\partial^2f}{\partial\theta^2},
\end{displaymath}
integration by parts, in tandem with zero boundary conditions for $r=1$, yield
\begin{Eqnarray*}
  \langle\!\langle \Delta f,f\rangle\!\rangle&=&\int_0^1\int_{-\pi}^\pi r \!\left(\frac{\partial^2 f}{\partial r^2}+\frac{1}{r}\frac{\partial f}{\partial r}+\frac{1}{r^2} \frac{\partial^2f}{\partial\theta^2}\right)\!\bar{f}\D\theta\D r\\
  &=&-\int_0^1\int_{-\pi}^\pi \frac{\partial |f|^2}{\partial r}\D\theta\D r -\int_0^1\int_{-\pi}^\pi \left(r \left|\frac{\partial f}{\partial r}\right|^{\!2} +\frac{1}{r} \left|\frac{\partial f}{\partial\theta}\right|^{\!2}\right)\D\theta\D r.
\end{Eqnarray*}
Therefore, negative semidefiniteness with respect to the inner product \R{eq:1.4} is not assured and it is easy to check that the same applies to the inner product \R{eq:1.5}. Yet, the two scalar products are `almost' negative semidefinite: in particular, certain beneficial aspects of negative semidefiniteness remain valid in this setting.

Let us examine the impact of this observation on the stability in the case of the diffusion equation and the LSE. To this end we consider {\em compound differentiation matrices\/} $\EEE^{[r]}$ and $\EEE^{[\theta]}$. Their first row and column originate in $f_0$, while the $(1,1)$ principal minor stem from $f_1$. Thus,
\begin{displaymath}
  \EEE^{[r]}=\left[
  \begin{array}{cc}
         d & \MM{0}^\top\\
         \MM{0} & \DDD^{[r]}
  \end{array}
  \right]\!,\qquad \Re d=-\frac12 \int_{-\pi}^\pi  |f(0,\theta)|^2\D\theta,
\end{displaymath}
while $\EEE^{[\theta]}$ is skew-Hermitian. It follows that $\mathcal{R}={\EEE^{[r]}}^\top\!\EEE^{[r]}$ is symmetric and its largest eigenvalue is bounded, while $\Theta={\EEE^{[\theta]}}^*\!\EEE^{[\theta]}$ is symmetric and negative semidefinite. Therefore $\|\exp(t(\mathcal{R}+\Theta))\|\leq \ee^{|d|^2t}$ and the method is stable for the diffusion equation (essentially, because $\mathcal{R}$ is symmetric and its spectrum is bounded in the complex right half-plane -- in the language of semigroup theory, it is a sectorial operator). Note however that dissipativity might be lost.

Matters are simpler in the case of LSE. $\mathcal{R}+\Theta$ being symmetric, it follows that $\ii(\mathcal{R}+\Theta)$ is skew-Hermitian, therefore $\|\exp(\ii t(\mathcal{R}+\Theta))\|\equiv1$: the method is both stable and preserves unitarity.

\subsection{Computational considerations}

There are two types of computational issues that must be taken into account while implementing W-functions in the context of spectral methods for PDE. Firstly, we must consider the purely approximation-theoretic issue of representing functions $f\in\mathfrak{A}(\Ball)$ in the underlying basis and, secondly, linear-algebraic considerations occurring once these expansions are applied in a spectral method.

At the outset we need to decide on the value of $\alpha$: the choice $\alpha=2$ appears to be optimal since it leads to exponential convergence for analytic functions $f$ and imposes the right boundary condition for $r=1$ \cite{iserles24ssm}.

Given $f\in\mathfrak{A}(\Ball)$, represented in polar coordinates, we expand
\begin{equation}
  \label{eq:2.9}
  \hat{f}_{m,n}=\int_0^1\int_{-\pi}^\pi f(r,\theta)\overline{\varphi_{m,n}(r,\theta)}\D\theta\D r,\qquad m=0,\ldots,M,\quad |n|=0,\ldots,N.
\end{equation}
Practical computation of the $\theta$ integral requires Fast Fourier Transform at the cost of $\O{N\log_2N}$ operations. The $r$ integral can be computed by any of the $\O{M\log_2M}$ methods in \cite{cantero12rce,olver29fau}. Of course, in practice we need first a POS $f=f_0+f_1$, while applying \R{eq:2.9} to $f_1$ -- expanding $f_0$ can be accomplished by a single FFT.

Next is the construction of  differentiation matrices. We need first to evaluate the coefficients $\GG{a}_m,\GG{b}_m$ in \R{eq:2.6} for $m=0,\ldots,M$. This can be done recursively in $\O{M}$ operations:
\begin{Eqnarray*}
  \GG{a}_0&=&\sqrt{\frac{2\alpha+1}{2\G(2\alpha+1)}},\qquad \GG{a}_m=\GG{a}_{m-1}\sqrt{\frac{m(2m+2\alpha+1)}{(m+2\alpha)(2m+2\alpha-1)}},\quad m=1,\ldots,M,\\
  \GG{b}_0&=&\sqrt{\frac{\G(2\alpha+2)}{2}},\qquad \GG{b}_m=\GG{b}_{m-1}\sqrt{\frac{(2m+1+2\alpha)(m+2\alpha)}{m(2m+2\alpha-1)}},\quad m=1,\ldots,M
\end{Eqnarray*}
and need be done just once, rather than in each time step. 

Cognisant of the discussion in Subsection~2.3, the {\em nonzero\/} elements of differentiation matrices are
\begin{Eqnarray*}
  \DDD_{(n,m),(k,m)}^{[r]}&=&\begin{case}
    \GG{a}_n\GG{b}_k, & n\geq k+1,\; k+n\mbox{\ odd},\\
    -\GG{a}_k\GG{b}_n, & k\geq n+1,\; k+n\mbox{\ odd},
  \end{case}\\
  \DDD_{(n,m),(k,m)}^{[\theta]}&=&\ii m,\qquad |m|\leq M,\quad k,n=0,\ldots,N.
\end{Eqnarray*}

Finally, we address the issue of linear algebraic calculations. The matrix $\DDD^{[r]}$ being semi-separable and $\DDD^{[\theta]}$ diagonal, it takes just $\O{M+N}$ flops to compute products of differentiation matrices by vectors (i.e.\ the computation of expansion coefficients of derivatives from expansion coefficients of a function) \cite{iserles24ssm}. Semi-separability also lends itself to the solution of linear systems in $\O{M^2}$ operations \cite{hackbusch15hma} and thus also to the computation of a matrix exponential (and other analytic matrix functions) through the Dunford formula
\begin{displaymath}
  g(\mathcal{A})\MM{v}=\frac{1}{2\pi\ii} \oint_{\partial\Omega} g(\lambda) (\lambda I-\mathcal{A})^{-1}\MM{v} \D\lambda,
\end{displaymath}
where $\mathcal A$ is a linear operator and $\Omega$ is a simply-connected domain with Jordan boundary such that $\sigma(\mathcal{A})\subset\Omega$ \cite{hackbusch15hma}. In practical quadrature we discretise the integral at roots of unity, reducing the task in hand to the solution of linear systems with FFT.

\section{The unit ball $\Ball\subset\mathbb{R}^d$, $d\geq2$}

The construction of {\em generalised Zernike functions\/} can be easily extended to any dimension $d\geq2$, since the radial variable $r\in[0,1]$ (we use again  polar coordinates) is always scalar, while there are $d-1$ angular variables $\theta_1\in[-\pi,\pi]$, $\theta_2,\ldots,\theta_{d-1}\in[0,\pi]$. Moreover, we have seen in the last section that the main difficulty is represented by the radial variable and this remains the case for $d\geq3$.

Thus, given $\alpha>0$, we let
\begin{equation}
  \varphi_{n,\Mm{m}}(r,\MM{\theta})=\frac{[r(1-r)]^{\alpha/2}}{\sqrt{2h_n^{(\alpha,\alpha)}}\pi^{(d-1)/2}} \tilde{\PP}_n^{(\alpha,\alpha)}(2r-1)\ee^{\ii(2\Mm{m}^\top\!\Mm{\theta}-m_1\theta_1)},\;\; n\in\mathbb{Z}_+,\; \MM{m}\in\mathbb{Z}^{d-1},
\end{equation}
where $r\in[0,1]$, $\theta_1\in[-\pi,\pi]$, $\theta_\ell\in[0,\pi]$, $\ell=2,\ldots,d-1$, $\tilde{\PP}_n^{(\alpha,\alpha)}$ is the $n$th orthonormal ultraspherical polynomial and $\alpha>0$: in practice we  use $\alpha=2$.

We again factorise $f\in\CC{L}_2(\Ball)$ into a POS, $f=f_0+f_1$, such that both $f_0$ and $f_1$ obey zero boundary conditions for $r=1$, $f_1(0,\MM{\theta})=0$ and the functions are orthogonal with respect to the \R{eq:1.5} scalar product. Computational details change little from the case $d=2$: there is just one radial variable, while calculating the expansion
\begin{displaymath}
  \hat{f}_{n,\Mm{m}}=\int_0^1 \int_{-\pi}^\pi\int_0^\pi\cdots\int_0^\pi  f(r,\MM{\theta})\overline{\varphi_{n,\Mm{m}}(r,\MM{\theta})}\D\theta_1\D\theta_2\cdots\D\theta_{d-1}\D r
\end{displaymath}
for $ n=0,\ldots,N$, $|\MM{m}|\leq M$, can be accomplished with a $(d-1)$-dimensional FFT at the cost of $\O{N^{d-1}\log_2N}$ operations. While assembling  differentiation matrices requires great care, they are again massively sparse, $\DDD^{[r]}$ is semi-separable and the setting lends itself to fast algebra.

\section{Numerical examples}

\subsection{$d=2$}

In this section we approximate the test function
\begin{eqnarray*}
f(r, \theta) = (1-r) \ee^{r} \ee^{\ii (\theta+\frac12)}, \quad \theta \in [-\pi, \pi], \quad r \in [0, 1], \quad f(1, \theta) = 0,
\end{eqnarray*}
which is displayed in Fig.~\ref{Fig:4.1} in polar coordinates, by the series
\begin{eqnarray*}
f_{N, K} = \sum_{n=0}^{N} \sum_{k=-K}^K \hat{f}_{n, k} \varphi_{n, k}(r, \theta),
\end{eqnarray*}
where $\{\varphi_{n, k}(r, \theta)\}$ has been given in \R{eq:2.3}, using the values $N=6$ and $K=5$. We rearrange $\hat{f}_{n, k}$, $n=0, \cdots, N$ and $k = -K, \cdots, K$ into the sequence $\hat{f}_q$ with a single index, $q=0, \cdots, (2K+1)(N+1)$. Considering that many $\hat{f}_q$s are zeros and the sequence is highly sparse, we extract the nonzero coefficients to exhibit the decay rate of the coefficients $\hat{f}_q$.

\begin{figure}[tb]
  \begin{center}
    \includegraphics[width=160pt]{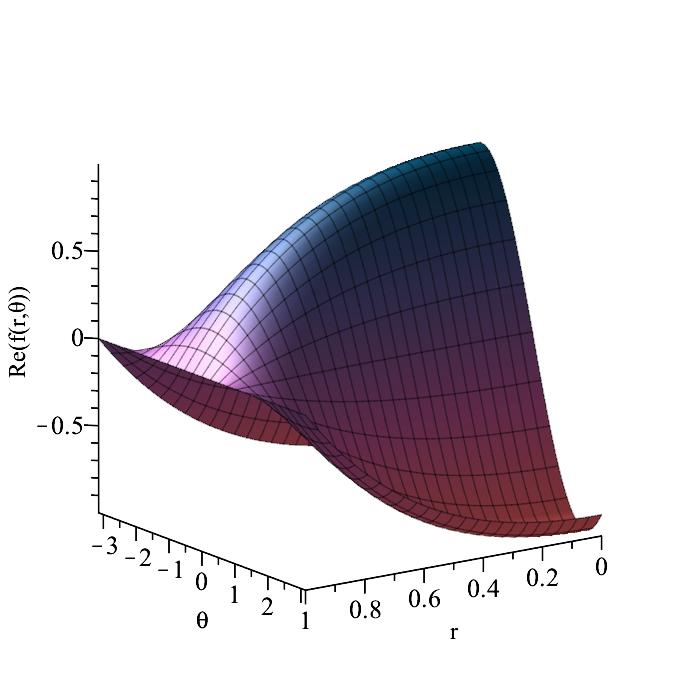}\hspace*{15pt}
    \includegraphics[width=160pt]{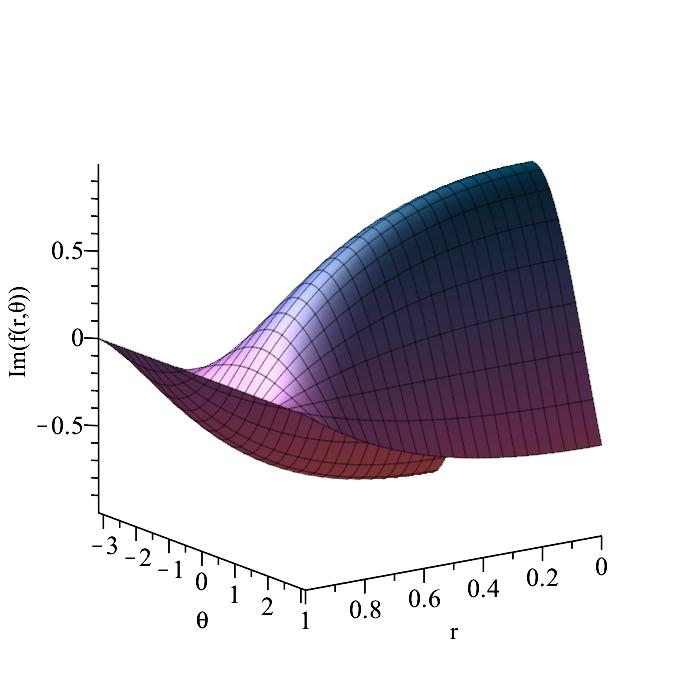}
    \caption{The real (the left) and imaginary (the right) parts of the function $f(r, \theta)$ in the `polar rectangle' $r\in[0,1]$, $\theta\in[-\pi,\pi]$.}
    \label{Fig:4.1}
  \end{center}
\end{figure}

In order to illustrate the behaviour of the error, we define the componentwise forms of $\ell_2$ and $\ell_{\infty}$ errors as
\begin{Eqnarray*}
\MM{e}_{\infty} &:=& \max \left|e(r_m, \theta_j)\right|, \quad \MM{e}_2 := \sqrt{\sum_{j=0}^{M }\sum_{m=0}^{M } \left|e(r_m, \theta_j)\right|^2}, \qquad \mbox{where}\\
r_m &=& \sin^2 \frac{m \pi}{2M }, \quad \theta_j = -\pi + \frac{2 j m\pi}{M}, \qquad m, j = 0, \cdots, M,
\end{Eqnarray*}
and $e=f_{N,K}-f$. Here we take $M  = 6$.

\vspace{6pt}
\noindent{\bf Example 1.} This is the only example where we use the norm \R{eq:1.4},
\begin{displaymath}
  \langle\!\langle f,g\rangle\!\rangle=\int_0^1\int_{-\pi}^\pi r f(r,\theta)\overline{g(r,\theta)}\D \theta\D r
\end{displaymath}
-- in all remaining examples we employ the Cartesian norm \R{eq:1.5} over $[r,\theta]$, $r\in[0,1]$, $\theta\in[-\pi,\pi]$. To this end we set
\begin{displaymath}
  \varphi_{n,m}(r,\theta)= \frac{(2\pi)^{-1/2}}{\sqrt{h_n^{(\alpha,1)}}} (1-r)^{\alpha/2}\PP_n^{(\alpha,1)}(2r-1)\ee^{\ii m\theta},\qquad n\in\mathbb{Z}_+,\quad m\in\mathbb{Z},
\end{displaymath}
where $\alpha>1$: it is trivial that the system $\{\varphi_{n,m}\}_{n\in\mathbb{Z}_+,m\in\mathbb{Z}}$ is an orthonormal basis of $\mathfrak{A}(\Ball)$ with respect to the inner product $\langle\!\langle\,\cdot\,,\,\cdot\,\rangle\!\rangle$. The expansion of a function $f$ is
\begin{displaymath}
  f(r,\theta)=\sum_{n=0}^\infty\sum_{m=-\infty}^\infty \check{f}_{m,n} \varphi_{n,m}(r,\theta),
\end{displaymath}
where
\begin{displaymath}
  \check{f}_{n,m}=\int_0^1\int_{-\pi}^\pi r f(r,\theta) \overline{\varphi_{n,m}(r,\theta)}\D\theta\D r.
\end{displaymath}

It follows from the analysis of Subsection~2.1 (and is verifiable easily by direct computation) that
\begin{displaymath}
  \DDD_{(n,k),(m,k)}^{\,[r]}+\DDD_{(m,k),(n,k)}^{\,[r]}=-\mathcal{S}_{m,n},\qquad m,n\in\mathbb{Z}_+,\quad k\in\mathbb{Z},
\end{displaymath}
where
\begin{displaymath}
  \mathcal{S}_{m,n}=\int_0^1 \varphi_m(r)\varphi_n(r)\D r.
\end{displaymath}
The matrix $\mathcal{S}$ can be computed explicitly using the methodology of \cite{gao23rrg} and the details are left to the reader,
\begin{displaymath}
  \mathcal{S}_{m,n}=(-1)^{m+n} \sqrt{\frac{n+1}{m+1}} \sqrt{\frac{(\alpha+n+1)(\alpha+2m+2)(\alpha+2n+2)}{\alpha+m+1}},\qquad m\geq n,
\end{displaymath}
with symmetric completion for $m\leq n-1$. In particular, $\mathcal{S}$ is nonzero and the departure from skew-symmetry of $\DDD^{\,[r]}$ is substantial.\footnote{Note that $\mathcal{S}$ is semi-separable of rank 2 -- this, of course, has no bearing on our work.}
The magnitude of the coefficients $\check{f}_n$ is displayed in logarithmic scale in Fig.~\ref{Fig:4.2}. The decay is exponential and, purely from the standpoint of approximation theory, this is an excellent means to approximate in the norm $\langle\!\langle\,\cdot\,,\,\cdot\,\rangle\!\rangle$. However, it is of little merit in the context of spectral methods because $\DDD^{\![r]}$ departs so substantially from skew symmetry.

\begin{figure}[tb]
  \begin{center}
    \includegraphics[width=150pt]{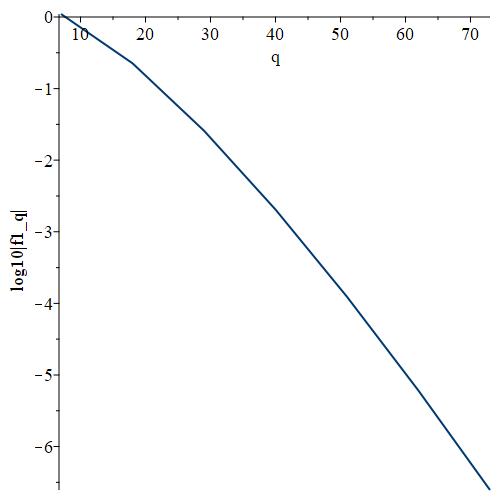}\hspace*{2pt}
    \includegraphics[width=150pt]{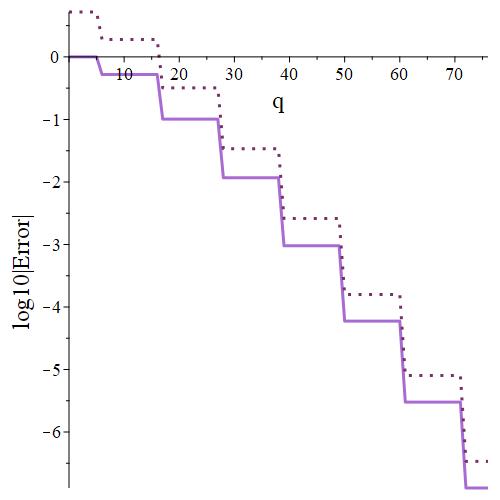}
    \caption{The logarithmic nonzero coefficients $\log10|\check{f}_q|$ and the errors $\MM{e}_\infty$ (solid line) and $\MM{e}_2$ (dotted line) by taking the norm $\langle\!\langle\,\cdot\,,\,\cdot\,\rangle\!\rangle$ with $\alpha=2$, $\beta=1$.}
    \label{Fig:4.2}
  \end{center}
\end{figure}

\vspace{6pt}
\noindent{\bf Example 2.} We apply $\{\varphi_{n, k}(r, \theta)\}$, as given in (\ref{eq:2.3}), with $\alpha=2$ and $\beta=0$, to approximate $f(r, \theta)$ directly in the \R{eq:1.5} scalar product. Note that $\beta=0$ prevents the $\varphi_{n,k}$s from vanishing for $r=0$. The nonzero coefficients $|\hat{f}_q|$ are drawn in the logarithmic scale for $q=7, 18, 29, 40, 51, 62, 73$ and the errors $\MM{e}_2$ and $\MM{e}_\infty$,  are displayed in  Fig.~\ref{Fig:4.3}.
\begin{figure}[tb]
  \begin{center}
    \includegraphics[width=160pt]{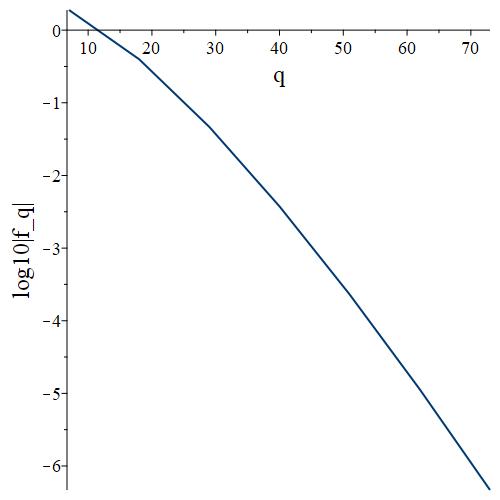}\hspace*{15pt}
    \includegraphics[width=160pt]{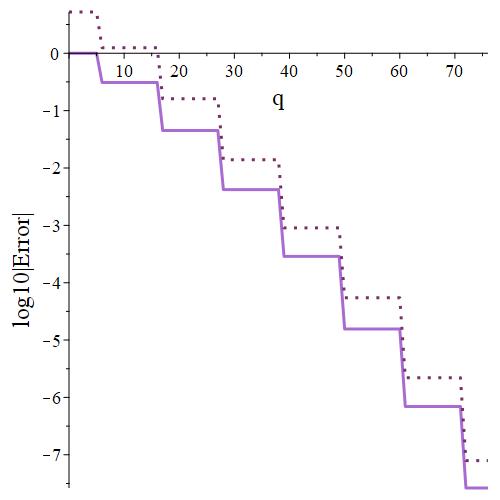}
    \caption{The nonzero coefficients $\hat{f}_q$ (on the left) in logarithmic scale and the errors (on the right) $\MM{e}_\infty$ (solid line) and $\MM{e}_2$ (dotted line) with $\alpha=2$, $\beta=0$.}
    \label{Fig:4.3}
  \end{center}
\end{figure}

It is evident from the figure that convergence takes place at an exponential speed. However, the purpose of the exercise is fast convergence in tandem with (essential) skew symmetry of the differentiation matrices -- and this is impossible in the present case because $\beta=0$. Specifically, for any $\alpha,\beta>-1$, recalling from \R{eq:2.3} that
\begin{displaymath}
  \varphi_{n,k}(r,\theta)=(2\pi)^{-1/2} \tilde{\varphi}_n(r) \ee^{\ii m\theta},\qquad \tilde{\varphi}_n(r)=2^{\frac12(\alpha+\beta+1)} (1-r)^{\alpha/2} r^{\beta/2} \tilde{\PP}_n^{(\alpha,\beta)}(2r-1),
\end{displaymath}
we have for $\alpha\geq0$, $\beta=0$
\begin{Eqnarray*}
  \DDD_{(n,m),(k,k)}+\DDD_{(m,n),(k,k)}&=&\int_0^1 [\tilde{\varphi}_n'(r)\tilde{\varphi}_m(r)+\tilde{\varphi}_n(r)\tilde{\varphi}_m'(r)]\D r\\
  &=&\int_0^1 \frac{\D}{\D r} [\tilde{\varphi}_n(r)\tilde{\varphi}_m(r)]\D r=\tilde{\varphi}_n(1)\tilde{\varphi}_m(1)-\tilde{\varphi}_n(0)\tilde{\varphi}_m(0)\\
  &=&(-1)^{n+m} \sqrt{(\alpha+2n+1)(\alpha+2m+1)}\neq0,
\end{Eqnarray*}
where we have implemented the value of Jacobi polynomials at endpoints from \cite{rainville60sf}. Therefore no skew symmetry is possible -- indeed, $\DDD^{\,[r]}$ is very different from a skew symmetric matrix -- and this course of action confers no advantages compared to Example~1.

\vspace{6pt}
\noindent{\bf Example 3.} The choice $\alpha=\beta>0$ renders $\DDD^{\,[r]}$ skew-symmetric and semi-separable (cf.\ Subsection~2.2), in particular $\alpha=\beta=2$ also means that the $\varphi_{n,k}$s are analytic functions. However, the effect of this choice on the speed of convergence is alarming: because $\varphi_{n,k}(0,\theta)=0$ for all $n\in\mathbb{Z}_+$, $k\in\mathbb{Z}$, we cannot expect pointwise convergence for any $f$ such that $f(0,\theta)\neq0$ and, while $\CC{L}_2$ convergence is possible, it is likely to be excruciatingly slow:  Fig.~\ref{Fig:4.4} is suggestive of $|\hat{f}_q|=\O{q^{-1/4}}$. 

\begin{figure}[tb]
  \begin{center}
    \includegraphics[width=125pt]{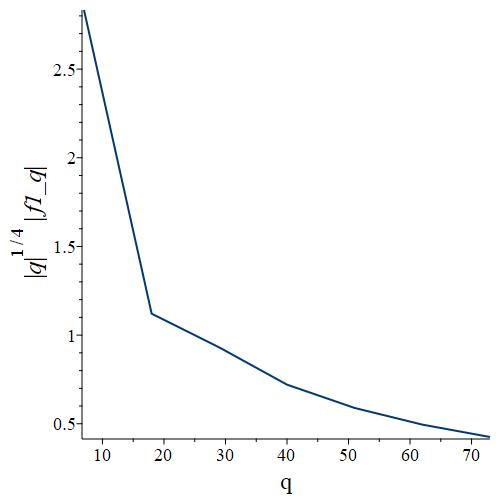}\hspace*{2pt}
    \includegraphics[width=125pt]{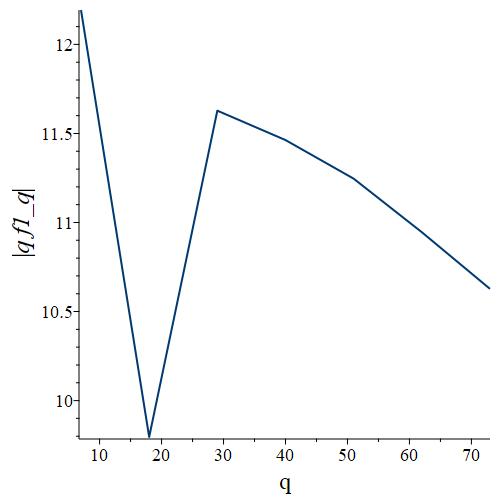}\hspace*{2pt}
    \includegraphics[width=125pt]{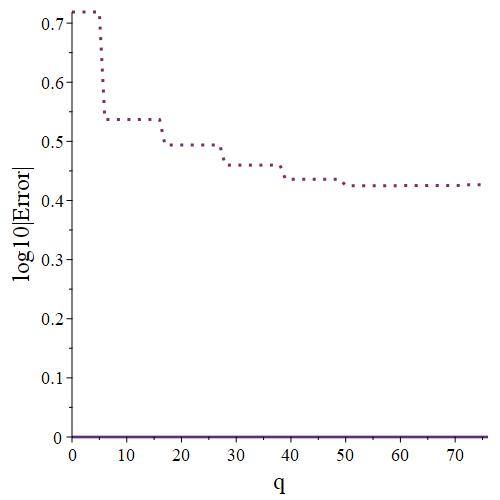}
    \caption{Scaled nonzero coefficients $q^{1/4} |\hat{f}_q|$ (on the left), $q |\hat{f}_q|$ (in the middle) and the errors (on the right) $\MM{e}_\infty$ (solid line) and $\MM{e}_2$ (dotted line) using the basis $\varphi_{n, k}$ directly with $\alpha=\beta=2$.}
    \label{Fig:4.4}
  \end{center}
\end{figure}

\vspace{6pt}
\noindent {\bf Example 4.} As we have seen in Section~2, the right way of resolving the conflict between ensuring fast rate of convergence and generating an essentially skew-symmetric differentiation matrix is to split $f=f_0+f_1$, where $f_0$ is a fixed function, scaled by $f(0)$, while $f_1\in\Azero(\Ball)$ and $f_0\perp f_1$. Our first implementation of splitting takes $\alpha=\beta=1$: this ensures that the differentiation matrix is essentially skew symmetric and semi-separable.

This choice of $\alpha$ and $\beta$, however, leads to an orthogonal system which loses analyticity, because of the factor $\sqrt{r(1-r)}$ in \R{eq:2.3}. This impacts the rate of convergence and this is confirmed in Fig.~\ref{Fig:4.5}. Essentially, the (dismal) rate of convergence is similar to Example~2.

\begin{figure}[tb]
  \begin{center}
    \includegraphics[width=120pt]{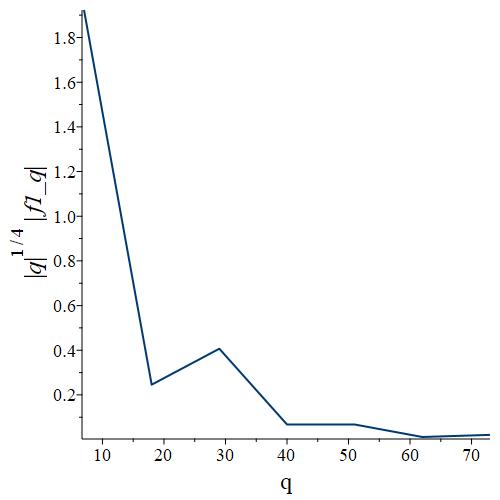}\hspace*{2pt}
    \includegraphics[width=120pt]{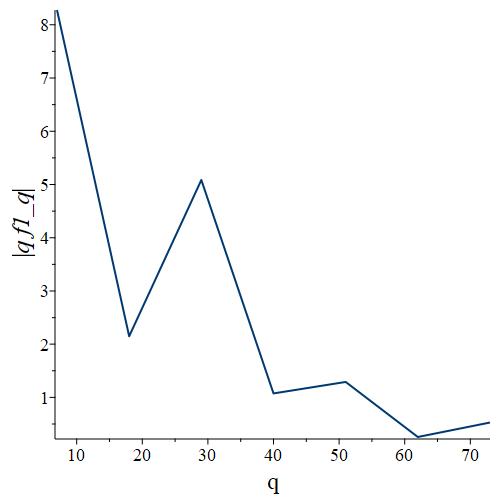}\hspace*{2pt}
    \includegraphics[width=120pt]{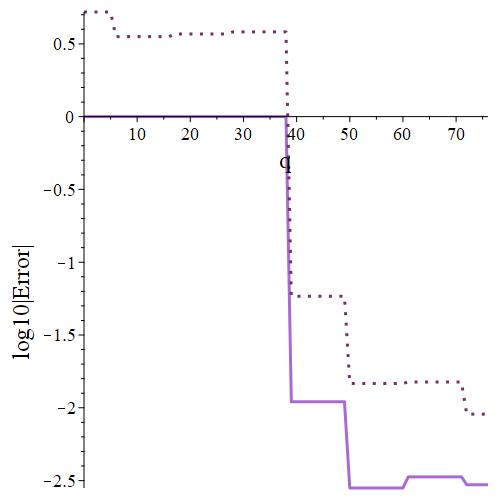}
    \caption{The nonzero coefficients $q^{1/4} |\hat{f}_q|$ (on the left), $q |\hat{f}_q|$ (in the middle) and the errors (on the right) $\MM{e}_\infty$ (solid line) and $\MM{e}_2$ (dotted line) using two orthogonal expansions on $f_0$ and $f_1$ separately for $\alpha=\beta=1$.}
    \label{Fig:4.5}
  \end{center}
\end{figure}

\vspace{6pt}
\noindent{\bf Example 5.}  We now put together all the ingredients that guarantee  an essentially skew-symmetric, semi-separable differentiation matrix and rapid rate of convergence:
\begin{enumerate}
\item POS $f=f_0+f_1$, consistent with the conditions stipulated in Subsection~2.1: $f_0(1,\theta)\equiv f_1(1,\theta)\equiv0$, $f_0(0,\theta)= f(0,\theta)$, $f_1(0,\theta)\equiv0$ and $\langle f_0,f_1\rangle=0$ (in the scalar product \R{eq:1.5}). To be specific,
\begin{displaymath}
  f_0(r,\theta)=(1-r)(2-\ee^r)\ee^{\ii(\theta+\frac12)},\qquad f_1(r,\theta)=2(1-r)(\ee^r-1)\ee^{\ii(\theta+\frac12)};
\end{displaymath}
\item The choice $\alpha=\beta\geq1$, to ensure that differentiation matrices are essentially skew-symmetric and semi-separable;
\item Taking $\alpha=\beta$ as an even natural number, so that the $\varphi_{n,k}$s are analytic functions of $r$ (and of course of $\theta$), thereby ensuring rapid convergence for analytic (or very regular) functions $f$.
\end{enumerate}
Differentiation matrices are essentially skew symmetric, in line with the analysis in Subsection~2.3. They are also semi-separable of rank 2, allowing for efficient linear algebra.

\begin{figure}[tb]
  \begin{center}
    \includegraphics[width=160pt]{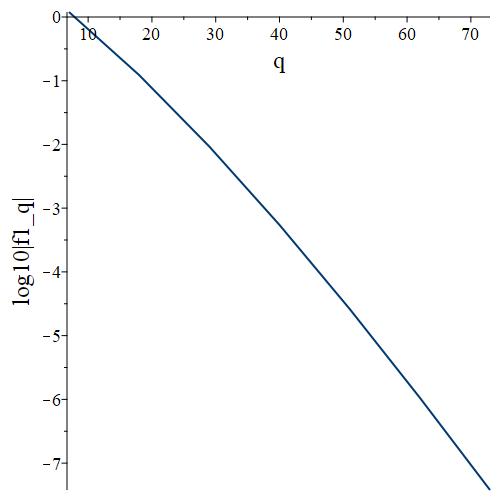}\hspace*{15pt}
    \includegraphics[width=160pt]{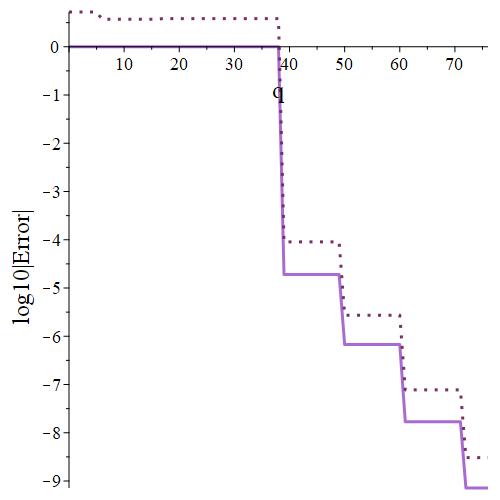}
    \caption{The nonzero coefficients in logarithmic scale (on the left) and the errors (on the right) $\MM{e}_\infty$ (solid line) and $\MM{e}_2$ (dotted line) using two orthogonal expansions on $f_0$ and $f_1$ separately for $\alpha=\beta=2$.}
    \label{Fig:4.6}
  \end{center}
\end{figure}

Fig.~\ref{Fig:4.6} displays the rate of decay of $|\hat{f}_q|$ and the errors $\MM{e}_\infty$ and $\MM{e}_2$ in logarithmic scale to base 10. It is clear that geometric decay takes place in all cases -- the decay of $\MM{e}_2$ and $\MM{e}_\infty$ appears to be a step function because the vector $\hat{\MM{f}}$ is highly sparse. Note that we obtain accuracy of $\approx 10^{-9}$ with just 77 degrees of freedom {\em in two dimensions\/}, which is quite remarkable! Note further that the rate of decay of the coefficients is substantially faster than in Fig.~\ref{Fig:4.2}: thus, we get both better differentiation matrix and smaller error! Of course, the $\check{f}_n$s in Example~1 and the $\hat{f}_n$s here correspond to different norms but it is trivial that $\langle\!\langle f,f\rangle\!\rangle\leq \langle f,f\rangle$ so, intuitively, were the error in the two expansions to decay at similar speed, we would expect the $\check{f}_n$s to decay {\em faster!\/} Inasmuch as not much can be read into a single example, we regard this  as worthy of a comment.

\begin{figure}[tb]
  \begin{center}
    \includegraphics[width=160pt]{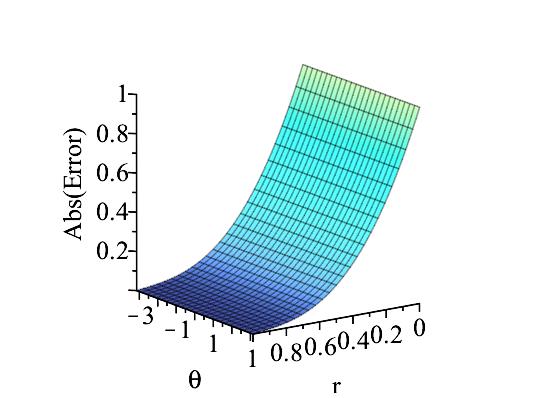}\hspace*{15pt}
    \includegraphics[width=160pt]{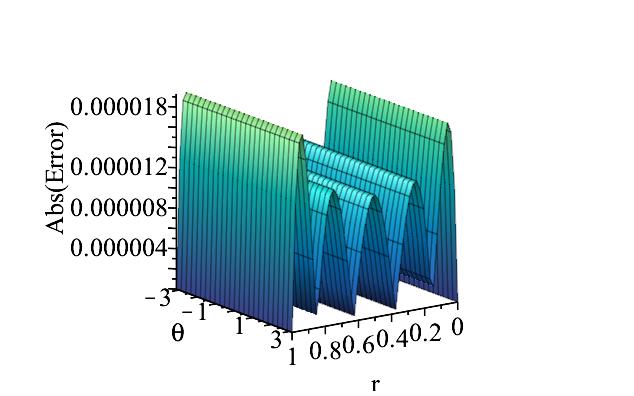}\\
    \includegraphics[width=160pt]{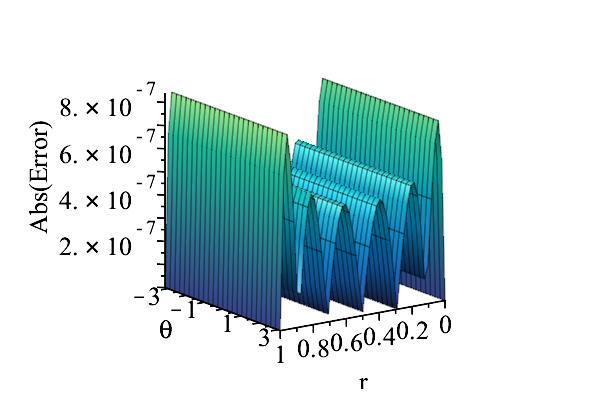}\hspace*{15pt}
    \includegraphics[width=160pt]{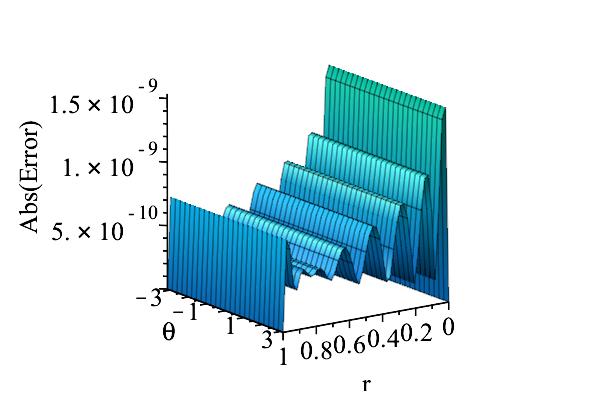}\\
    \caption{Pointwise errors in the $(r,\theta)$ rectangle using the basis from Example~5 with $q=10, 40, 50, 76$ (from the top left, top right, bottom left and bottom right) and $\alpha = \beta = 2$.}
    \label{Fig:4.7}
  \end{center}
\end{figure}

Finally, in Fig.~\ref{Fig:4.7} we display pointwise error in approximating $f$ with the current W-functions, in tandem with POS: the accuracy is remarkable. Just 77 degrees of freedom yield an error of $\approx 10^{-9}$ in two dimensions!

\subsection{$d=3$}

We provide a single example in three dimensions, using the same methodology as in Example~5: to recall, this represented the combination of all necessary ingredients ensuring both exponential convergence and an essentially skew-symmetric differentiation matrices. We consider the function
\begin{equation}
  \label{eq:4.1}
  f(r,\theta_1,\theta_2)=(1-r)\ee^r \ee^{\ii(\frac12+\theta_1+2\theta_2)}=f_0(r,\theta_1,\theta_2)+f_1(r,\theta_1,\theta_2)
\end{equation}
for $r\in[0,1]$, $\theta_1\in[-\pi,\pi]$ and $\theta_2\in[0,\pi]$, where
\begin{Eqnarray*}
  f_0(r,\theta_1,\theta_2)&=&2(1-r)(1-\ee^r)\ee^{\ii(\frac12+\theta_1+2\theta_2)},\\
  f_1(r,\theta_1,\theta_2)&=&2(1-r)(\ee^r-2)\ee^{\ii(\frac12+\theta_1+2\theta_2)}
\end{Eqnarray*}
is a POS. Specifically,
\begin{displaymath}
  f(r,\theta_1,\theta_2)=\sum_{k_1,k_2=-K}^K  f^\circ_{n,k_1,k_2}\ee^{\ii(k_1\theta_1+2k_2\theta_2)}+\sum_{n=0}^N\sum_{k_1,k_2=-K}^K \hat{f}_{n,k_1,k_2} \varphi_{n,k_1,k_2}(r,\theta_1,\theta_2),
\end{displaymath}
where
\begin{displaymath}
  \varphi_{n,k_1,k_2}(r,\theta_1,\theta_2)=\frac{r(1-r)}{\sqrt{2h_n^{(2,2)}}\pi} \tilde{\PP}_n^{(2,2)}(2r-1)\ee^{\ii(k_1\theta_1+2k_2\theta_2)}
\end{displaymath}
and
\begin{Eqnarray*}
  f^\circ_{n,k_1,k_2}&=&\frac{1}{2\pi^2} \int_{-\pi}^\pi\int_0^\pi f_0(r,\theta_1,\theta_2) \ee^{-\ii(k_1\theta_1+2k_2\theta_2)}\D\theta_2\D\theta_1,\\
  \hat{f}_{n,k_1,k_2}&=&\int_0^1\int_{-\pi}^\pi \int_0^\pi f_1(r,\theta_1,\theta_2) \overline{\varphi_{n,k_1,k_2}(r,\theta_1,\theta_2)}\D\theta_2\D\theta_1\D r
\end{Eqnarray*}
and $K=3$, $M=6$, $N=5$. We again arrange the $\hat{f}_{n,k_1,k_2}$ into an exceedingly sparse vector $(f_q)$ for $q=0,\ldots,(2K+1)^2(N+1)=294$ and use the same notation $\MM{e}_2$ and $\MM{e}_\infty$ for errors as in the two-dimensional case, computing them at the discrete points
\begin{displaymath}
  r_m=\sin^2\frac{m\pi}{2M},\quad\;\; \theta_{1,j}=-\pi+\frac{2jm\pi}{M},\quad \theta_{2,j}=\frac{jm\pi}{M},\qquad m,j=0,\ldots,M.
\end{displaymath}

\begin{figure}[tb]
  \begin{center}
    \includegraphics[width=160pt]{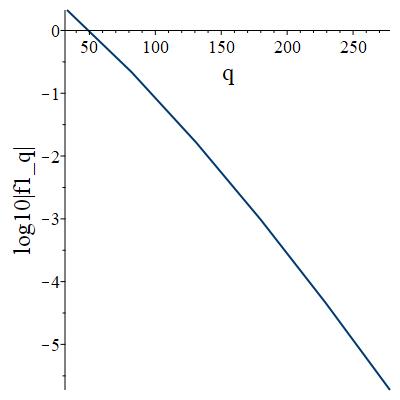}\hspace*{15pt}
    \includegraphics[width=160pt]{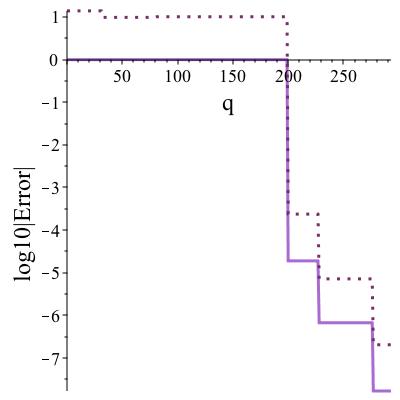}
    \caption{The nonzero coefficients in logarithmic scale (on the left) and the errors (on the right) $\MM{e}_\infty$ (solid line) and $\MM{e}_2$ (dotted line) in three dimensions.}
    \label{Fig:4.8}
  \end{center}
\end{figure}

In Fig.~\ref{Fig:4.8} we display the decay of the nonzero coefficients and the errors for the function \R{eq:4.1}. Exponential decay is evident. Moreover, as follows from our theory,   differentiation matrices are essentially skew symmetric.

\section*{Acknowledgements} JG's work has been  supported by by Natural Science Basic Research Plan in Shaanxi Province of China (Grant No.\ 2023JM-026) and National Natural Science Foundation of China (Grant No.\ 12271426).

\bibliographystyle{agsm}

\end{document}